\newtheorem{case}{Case}
\theoremstyle{plain}
\newtheorem*{rep@theorem}{\rep@title}
\newcommand{\newreptheorem}[2]{%
\newenvironment{rep#1}[1]{%
 \def\rep@title{#2 \ref{##1}}%
 \begin{rep@theorem}}%
 {\end{rep@theorem}}}
\newtheorem{theorem}{Theorem}
\newtheorem{lemma}[theorem]{Lemma}
\newtheorem{corollary}[theorem]{Corollary}
\newtheorem*{definition}{Definition}
\title{Characterizing meromorphic psuedo-lemniscates}
\author{Trevor Richards}
\date{July 2016}
\begin{document}

\maketitle

\begin{abstract}
Let $f$ be a meromorphic function with simply connected domain $G\subset\mathbb{C}$, and let $\Gamma\subset\mathbb{C}$ be a smooth Jordan curve.  We call a component of $f^{-1}(\Gamma)$ in $G$ a $\Gamma$-\textit{pseudo-lemniscate} of $f$.  In this note we give criteria for a smooth Jordan curve $\mathcal{S}$ in $G$ (with bounded face $D$) to be a psuedo-lemniscate of $f$ in terms of the number of preimages (counted with multiplicity) which a given $w$ has under $f$ in $D$, as $w$ ranges over the Riemann sphere.  We also develop a test, in the same terms, by which one may show that the image of a Jordan curve under $f$ is not a Jordan curve.
\end{abstract}

\section{Introduction}

It is well known that a degree $n$ Blaschke product is an $n$-to-$1$ (counting multiplicity) analytic self-map of the unit disk $\mathbb{D}$, and moreover that every such analytic $n$-to-$1$ self-map of the unit disk is a degree $n$ Blaschke product.  We can restate this in a slightly more general setting.  Throughout this paper we let $G\subset\mathbb{C}$ denote a simply connected domain, and we let $\mathcal{S}\subset G$ be a smooth Jordan curve, with bounded face $D$.

\begin{theorem}\label{thm: Blaschke prod. version.}
Let $f:G\to\mathbb{C}$ be analytic.  The following are equivalent.

\begin{enumerate}
\item $\mathcal{S}$ is a lemniscate of $f$ in $G$.
\item For some positive integer $n$, $f$ maps $D$ precisely $n$-to-$1$ onto $\mathbb{D}$.
\end{enumerate}

If Items $1$ and $2$ hold, the function $f\circ\phi$ is a degree $n$ Blaschke product, where $\phi:\mathbb{D}\to D$ is any Riemann map.
\end{theorem}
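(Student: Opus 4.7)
The strategy is to transport the problem to the unit disk via the Riemann map and then invoke the classical characterization of finite Blaschke products. Since $\mathcal{S}$ is a smooth Jordan curve bounding $D$ with $\overline{D}\subset G$, Carath\'eodory's theorem extends any Riemann map $\phi:\mathbb{D}\to D$ to a homeomorphism $\overline{\mathbb{D}}\to\overline{D}$, so $F:=f\circ\phi$ is analytic on $\mathbb{D}$, continuous on $\overline{\mathbb{D}}$, and satisfies $F(\partial\mathbb{D})=f(\mathcal{S})$ and $F(\mathbb{D})=f(D)$. I will verify both implications in terms of $F$.

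For the direction $(1)\Rightarrow(2)$, assume $\mathcal{S}$ is a $\partial\mathbb{D}$-pseudo-lemniscate, so $|F|\equiv 1$ on $\partial\mathbb{D}$. The function $F$ cannot be constant, since otherwise $f$ would be constant on $G$ by the identity theorem and $f^{-1}(\partial\mathbb{D})$ would be either empty or all of $G$, so $\mathcal{S}$ could not be a component. The maximum modulus principle then forces $F(\mathbb{D})\subset\mathbb{D}$, and the classical fact that a nonconstant analytic self-map of $\mathbb{D}$ with continuous unimodular boundary values is a finite Blaschke product gives $F$ as a degree-$n$ Blaschke product for some $n\ge 1$. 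Transferring back via $\phi$, $f$ is $n$-to-$1$ from $D$ onto $\mathbb{D}$, and $f\circ\phi=F$ is precisely the claimed Blaschke product.

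For the direction $(2)\Rightarrow(1)$, assume $f$ is $n$-to-$1$ from $D$ onto $\mathbb{D}$, so $F$ is $n$-to-$1$ from $\mathbb{D}$ onto $\mathbb{D}$; the main task is to prove $F(\partial\mathbb{D})\subset\partial\mathbb{D}$. Continuity and $F(\mathbb{D})=\mathbb{D}$ immediately yield $F(\partial\mathbb{D})\cap(\mathbb{C}\setminus\overline{\mathbb{D}})=\emptyset$, so the crucial point is to rule out $F(z_0)\in\mathbb{D}$ for some $z_0\in\partial\mathbb{D}$. Supposing such $z_0$ exists, the local normal form $F(z)=F(z_0)+a(z-z_0)^k+\cdots$ together with the open mapping theorem produces, for each $w\in\mathbb{D}$ close enough to $F(z_0)$, preimages arbitrarily near $z_0$; because $\mathbb{D}$ is a full target-side neighborhood of $F(z_0)$, these local preimages sweep a full source-side neighborhood of $z_0$ and in particular meet $\mathbb{D}$. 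Combined with the $n$ preimages of $w$ lying near the $n$ preimages of $F(z_0)$ in $\mathbb{D}$ (which stay inside $\mathbb{D}$ by continuity), this would yield at least $n+1$ preimages of $w$ in $\mathbb{D}$, contradicting the $n$-to-$1$ hypothesis. Once $F(\partial\mathbb{D})\subset\partial\mathbb{D}$ is established, the same classical theorem makes $F$ a degree-$n$ Blaschke product; since Blaschke products have nonvanishing derivative on $\partial\mathbb{D}$, $f$ is a local diffeomorphism near every point of $\mathcal{S}$, sending the two sides of $\mathcal{S}$ to opposite sides of $\partial\mathbb{D}$ and making $\mathcal{S}$ locally isolated in $f^{-1}(\partial\mathbb{D})$, hence a full connected component.

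The principal obstacle is the contradiction step in $(2)\Rightarrow(1)$: one must ensure the ``extra'' preimage produced near a hypothetical boundary point $z_0$ with $F(z_0)\in\mathbb{D}$ truly lies inside $\mathbb{D}$, handling the branched-cover case $k\ge2$ alongside the generic case $k=1$ and verifying that the surplus preimage is honestly disjoint from the $n$ preimages associated with $F(z_0)$.
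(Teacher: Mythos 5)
Your proposal is essentially correct, but it takes a genuinely different route from the paper: the paper does not actually prove Theorem~\ref{thm: Blaschke prod. version.} at all --- it is stated as a reformulation of the ``well known'' fact that degree~$n$ Blaschke products are exactly the $n$-to-$1$ analytic self-maps of $\mathbb{D}$, and it is later \emph{used} as an ingredient in the proof of Theorem~\ref{thm: Main.}. The machinery the paper develops for the general theorem would yield a different proof of this special case: the $(1)\Rightarrow(2)$ direction via the argument principle applied to the components of $f^{-1}(\mathbb{D})\cap D$, and the $(2)\Rightarrow(1)$ direction via the preimage-counting argument of Lemma~\ref{lem: Full copies.} together with the local structure of $f^{-1}(\mathbb{T})$ at a critical point ($2(m+1)$ arcs at equal angles, some of which must enter $D$). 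Your route instead pulls everything back to $\mathbb{D}$ and invokes the Fatou characterization of finite Blaschke products (analytic in $\mathbb{D}$, continuous and unimodular on $\partial\mathbb{D}$). This is shorter and gives the final Blaschke-product statement for free, but it is specific to $\Gamma=\mathbb{T}$ and would not generalize to the setting of Theorem~\ref{thm: Main.}, which is precisely why the paper needs the argument-principle approach there. Note also that your contradiction step in $(2)\Rightarrow(1)$ is, in substance, the paper's Lemma~\ref{lem: Full copies.} (Case~2) transported to the disk, and your handling of the ``surplus preimage'' (the $n$ preimages of $F(z_0)$ lie in the open disk, hence at positive distance from $z_0$, so the extra preimage near $z_0$ is genuinely new) is the right resolution of the obstacle you name.

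One technical point needs repair. You apply the local normal form $F(z)=F(z_0)+a(z-z_0)^k+\cdots$ at $z_0\in\partial\mathbb{D}$, and later use $F'\neq 0$ on $\partial\mathbb{D}$ via the chain rule; both require $F=f\circ\phi$ to be analytic (or at least differentiable with nonvanishing $\phi'$) \emph{at} the boundary, which Carath\'eodory's theorem does not provide --- for a merely smooth (not analytic) Jordan curve $\mathcal{S}$, $\phi$ extends as a homeomorphism, and smoothly by Kellogg--Warschawski, but not analytically across $\partial\mathbb{D}$. The clean fix is to run these local arguments for $f$ itself at the point $\zeta_0=\phi(z_0)\in\mathcal{S}\subset G$, where $f$ genuinely is analytic: if $f(\zeta_0)\in\mathbb{D}$, pick $\zeta\in D$ near $\zeta_0$ and set $w=f(\zeta)\in\mathbb{D}$ to manufacture the $(n+1)$-st preimage; and if $f'(\zeta_0)=0$, the $2(m+1)$-arc picture of $f^{-1}(\mathbb{T})$ at $\zeta_0$ forces an arc into $D$, contradicting $f(D)\subset\mathbb{D}$. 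With that adjustment the argument is complete.
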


The purpose of this note is to generalize the result of Theorem~\ref{thm: Blaschke prod. version.} to the context of meromorphic functions and pseudo-lemniscates.  In order to state the theorem, we begin with several definitions.  For the remainder of the paper we let $\Gamma$ denote some smooth Jordan curve in $\mathbb{C}$, with bounded face $\Omega_-$ and unbounded face $\Omega_+$ (where $\Omega_+$ includes the point $\infty$).

\begin{definition}\ 
\begin{itemize}
\item Let $f:G\to\hat{\mathbb{C}}$ be meromorphic.  Suppose that $\mathcal{S}$ is a component of $f^{-1}(\Gamma)$.  We call $\mathcal{S}$ a $\Gamma$-\textit{pseudo-lemniscate} of $f$.
\item For any $w\in\hat{\mathbb{C}}$, let $\mathcal{N}_f(w)$ denote the number of preimages under $f$ of $w$ in $D$, counted with multiplicity.
\end{itemize}
\end{definition}

In the special case that $\Gamma=\mathbb{T}$, the above definition reduces to the classical definition of a lemniscate.  The definition of a pseudo-lemniscate was introduced in~\cite{RY} wherein conformal equivalence of functions on domains bounded by pseudo-lemniscates was studied.  We now proceed to the promised generalization of Theorem~\ref{thm: Blaschke prod. version.}.

\begin{theorem}\label{thm: Main.}
Let $f:G\to\hat{\mathbb{C}}$ be meromorphic.  The following are equivalent.

\begin{enumerate}
\item $\mathcal{S}$ is a $\Gamma$-pseudo-lemniscate of $f$ in $G$.
\item For some non-negative integers $n_-$ and $n_+$, the following holds.
\begin{itemize}
\item For every $w\in\Omega_-$, $\mathcal{N}_f(w)=n_-$.
\item For every $w\in\Omega_+$, $\mathcal{N}_f(w)=n_+$.
\item For every $w\in\Gamma$, $\mathcal{N}_f(w)=\min(n_-,n_+)$.
\end{itemize}
\end{enumerate}

If Items $1$ and $2$ hold, and $f$ is analytic in $D$, then $n_+=0$, and the function $\psi^{-1}\circ f\circ\phi$ is a degree $n_-$ Blaschke product, where $\phi:\mathbb{D}\to D$ and $\psi:\mathbb{D}\to\Omega_-$ are any Riemann maps.

If Items $1$ and $2$ hold, and $\Gamma=\mathbb{T}$, then $f\circ\phi$ is a ratio of Blaschke products (degree $n_-$ in the numerator and degree $n_+$ in the denominator), where $\phi:\mathbb{D}\to D$ is any Riemann map.

\end{theorem}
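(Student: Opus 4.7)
The plan is to establish $(1)\Leftrightarrow(2)$ by combining the argument principle (which determines $\mathcal{N}_f$ on $\Omega_\pm$) with a local analysis of $f$ near $\mathcal{S}$ (which handles $\mathcal{N}_f$ on $\Gamma$ and the converse implication), and then to read off the two Blaschke-product identifications from Theorem~\ref{thm: Blaschke prod. version.}.

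For $(1)\Rightarrow(2)$, since $\Gamma\subset\mathbb{C}$ the function $f$ is analytic near $\mathcal{S}$; moreover $f$ has no critical points on $\mathcal{S}$, for a critical point of order $k\geq 2$ at $z_0\in\mathcal{S}$ would produce $k$ distinct smooth arcs of $f^{-1}(\Gamma)$ through $z_0$, forcing the component of $f^{-1}(\Gamma)$ containing $\mathcal{S}$ to properly contain the Jordan curve $\mathcal{S}$. Thus $f|_{\mathcal{S}}:\mathcal{S}\to\Gamma$ is a smooth covering map; its local degree $\pm 1$ is locally constant and hence constant on the connected set $\mathcal{S}$, so $f|_{\mathcal{S}}$ is uniformly orientation-preserving or uniformly orientation-reversing, with signed global degree $d$ equal to $\mathrm{wind}(f(\mathcal{S}),w)$ for any $w\in\Omega_-$. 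The argument principle on $D$ then yields $\mathcal{N}_f(w)-\mathcal{N}_f(\infty)=\mathrm{wind}(f(\mathcal{S}),w)$ for $w\in\hat{\mathbb{C}}\setminus\Gamma$, which is $d$ on $\Omega_-$ and $0$ on $\Omega_+$; setting $n_+:=\mathcal{N}_f(\infty)$ and $n_-:=n_++d$ handles the first two bullets. For $w\in\Gamma$, the $|d|$ simple preimages of $w$ on $\mathcal{S}$, by uniform orientation of $f|_{\mathcal{S}}$, all push the $D$-side of $\mathcal{S}$ into the same side of $\Gamma$ (into $\Omega_-$ if $d>0$, into $\Omega_+$ if $d<0$), each contributing $+1$ to whichever of $n_\pm$ is the larger; thus $\mathcal{N}_f(w)=\max(n_-,n_+)-|d|=\min(n_-,n_+)$.

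For $(2)\Rightarrow(1)$, I show first that $f(\mathcal{S})\subseteq\Gamma$ by contradiction: if $f(z_0)=w_0\in\Omega_-$ for some $z_0\in\mathcal{S}$, choose $V\subseteq\Omega_-$ a neighborhood of $w_0$ and $U\ni z_0$ with $f(U)\subseteq V$; whether $z_0$ is regular or critical, the local (possibly branched) cover $f:U\to V$ carries $\mathcal{S}\cap U$ to a one-dimensional set in $V$, and direct examination of preimage positions relative to $\mathcal{S}$ shows that the count of $w$-preimages lying in $D$ is non-constant on $V$, contradicting $\mathcal{N}_f\equiv n_-$ on $V$. The case $w_0\in\Omega_+$ is symmetric. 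Next I rule out critical points of $f$ on $\mathcal{S}$: at $z_0\in\mathcal{S}$ critical of order $k\geq 2$ with $w_0:=f(z_0)\in\Gamma$, the local model $f(z)=w_0+c(z-z_0)^k+\cdots$ shows that the near-$z_0$ count of preimages lying in $D$ is $0$ at $w=w_0$ (the only nearby preimage is $z_0\in\mathcal{S}$) but strictly positive for $w\in\Gamma$ close to $w_0$ on at least one side (some of the $k$ spread-out simple preimages lie on rays of $f^{-1}(\Gamma)$ extending into $D$), contradicting constancy of $\mathcal{N}_f$ on $\Gamma$. With no critical points on $\mathcal{S}$, $f^{-1}(\Gamma)$ is a smooth one-manifold coinciding with $\mathcal{S}$ in a neighborhood of $\mathcal{S}$, so $\mathcal{S}$ is a full component.

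For the final assertions: if $f$ is analytic on $D$, then $n_+=\mathcal{N}_f(\infty)=0$ and (2) implies $f$ maps $D$ exactly $n_-$-to-$1$ onto $\Omega_-$, so $\psi^{-1}\circ f\circ\phi:\mathbb{D}\to\mathbb{D}$ is analytic and $n_-$-to-$1$, and Theorem~\ref{thm: Blaschke prod. version.} identifies it as a degree-$n_-$ Blaschke product. If $\Gamma=\mathbb{T}$, let $B_2$ be the degree-$n_+$ Blaschke product on $\mathbb{D}$ whose zeros (with multiplicity) are the $\phi^{-1}$-images of the poles of $f$ in $D$; then $h:=(f\circ\phi)\cdot B_2$ is analytic on $\mathbb{D}$, extends analytically across $\mathbb{T}$ by smoothness of $\mathcal{S}$, and sends $\mathbb{T}$ into $\mathbb{T}$, so Theorem~\ref{thm: Blaschke prod. version.} applied to $h$ yields a degree-$n_-$ Blaschke product $B_1$ (its zero count being $n_-$), giving $f\circ\phi=B_1/B_2$. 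The principal technical obstacle I expect is the local-branching analysis near $\mathcal{S}$---both for $\mathcal{N}_f$ on $\Gamma$ in the forward direction and for excluding critical points on $\mathcal{S}$ in the converse---since the argument-principle picture must be supplemented by careful tracking of how the sheets of the (possibly branched) cover $f$ distribute between $D$ and $G\setminus\overline{D}$.
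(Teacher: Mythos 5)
Your proposal is correct in outline and, in its forward direction, takes a genuinely different route from the paper. Where the paper decomposes $f^{-1}(\Omega_\pm)\cap D$ into components $E$, composes with a Riemann map $\rho$ of $\Omega_-$, and applies the argument principle to each $E$ separately before summing, you first establish that $f|_{\mathcal{S}}$ is an orientation-coherent covering of $\Gamma$ of some signed degree $d$ and then apply the argument principle once on $\partial D=\mathcal{S}$ in its zeros-minus-poles form, obtaining $\mathcal{N}_f(w)-\mathcal{N}_f(\infty)=\mathrm{wind}(f(\mathcal{S}),w)$; this is more economical, makes the relation $n_--n_+=d$ (hence $n_-\neq n_+$ whenever Item 1 holds) explicit, and your treatment of $w\in\Gamma$ via the common orientation of $f$ at the $|d|$ boundary preimages is a pointwise version of the paper's collar argument with $D_\delta$ and $f(D_\delta)\subset\Omega_\mp$. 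The converse direction is essentially the paper's: your ``direct examination of preimage positions'' is exactly the content of the paper's Lemma~\ref{lem: Full copies.} (persistence of the $m_i$ interior preimages plus at least one extra preimage appearing near the boundary point), and your exclusion of critical points on $\mathcal{S}$ via rays of $f^{-1}(\Gamma)$ entering $D$ is the paper's argument almost verbatim. For $\Gamma=\mathbb{T}$ you clear only the poles with a Blaschke factor $B_2$ and invoke Theorem~\ref{thm: Blaschke prod. version.}, whereas the paper clears zeros and poles and applies the maximum/minimum modulus principle; both work, though yours still needs the maximum principle to see that $h=(f\circ\phi)\cdot B_2$ maps $\mathbb{D}$ into $\mathbb{D}$ before Theorem~\ref{thm: Blaschke prod. version.} applies. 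The steps you leave implicit --- surjectivity of $f|_{\mathcal{S}}$ onto $\Gamma$, and the compactness argument showing that preimages of $w'$ near $w$ cluster only at preimages of $w$ in $\overline{D}$ (the paper's Lemma~\ref{lem: Convergence of preimages.}) --- are genuine obligations but routine, so I would not call them gaps.
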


Using the fact that any meromorphic function is conformal away from its critical points, we may use Theorem~\ref{thm: Main.} to supply a test for when the image of $\mathcal{S}$ under $f$ will not be a Jordan curve.

\begin{corollary}\label{cor: Non-Jordan curve or crit. point.}
Let $f:G\to\hat{\mathbb{C}}$ be meromorphic.  If there are three points $w_1,w_2,w_3\in\hat{\mathbb{C}}$ such that the numbers $\mathcal{N}_f(w_1)$, $\mathcal{N}_f(w_2)$, and $\mathcal{N}_f(w_3)$ are distinct, then either $f$ has a critical point on $\mathcal{S}$ or $f(\mathcal{S})$ is not a Jordan curve.
\end{corollary}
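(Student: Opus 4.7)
\medskip

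\noindent\textbf{Proof plan.} I would argue by contrapositive: assuming that $f$ has no critical point on $\mathcal{S}$ and that $f(\mathcal{S})$ is a Jordan curve in $\hat{\mathbb{C}}$, I aim to show that $\mathcal{N}_f$ attains at most two distinct values on $\hat{\mathbb{C}}$. As a preliminary reduction, by post-composing $f$ with a M\"obius transformation sending a point of $\hat{\mathbb{C}}\setminus f(\mathcal{S})$ to $\infty$ (which affects neither the hypotheses nor the conclusion, since $\mathcal{N}_f$ is simply relabeled by the M\"obius map), I may assume $\infty\notin f(\mathcal{S})$, so that $\Gamma:=f(\mathcal{S})$ is a Jordan curve in $\mathbb{C}$.

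The crux is to use the no-critical-point hypothesis to upgrade $\mathcal{S}$ to a $\Gamma$-pseudo-lemniscate of $f$. Since $f$ is conformal at every point of $\mathcal{S}$, a compactness argument on the compact set $\mathcal{S}$ produces an open tubular neighborhood $N$ of $\mathcal{S}$ on which $f$ is locally biholomorphic. Two facts follow: first, smoothness of $\mathcal{S}$ combined with conformality of $f$ near $\mathcal{S}$ yields that $\Gamma=f(\mathcal{S})$ is a smooth Jordan curve; second, after shrinking $N$ if necessary, $f^{-1}(\Gamma)\cap N$ is a smooth one-dimensional submanifold of $N$ that coincides with $\mathcal{S}$, so $\mathcal{S}$ is both open and closed in $f^{-1}(\Gamma)$ relative to the ambient topology, hence a connected component of $f^{-1}(\Gamma)$.

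With $\mathcal{S}$ recognized as a $\Gamma$-pseudo-lemniscate of $f$, Theorem~\ref{thm: Main.} applies directly: it provides non-negative integers $n_-,n_+$ with $\mathcal{N}_f\equiv n_-$ on $\Omega_-$, $\mathcal{N}_f\equiv n_+$ on $\Omega_+$, and $\mathcal{N}_f\equiv\min(n_-,n_+)$ on $\Gamma$. Since $\hat{\mathbb{C}}=\Omega_-\cup\Gamma\cup\Omega_+$ and $\min(n_-,n_+)\in\{n_-,n_+\}$, the function $\mathcal{N}_f$ takes at most two distinct values on $\hat{\mathbb{C}}$, contradicting the existence of $w_1,w_2,w_3$ with pairwise distinct $\mathcal{N}_f$-values.

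The main obstacle is the middle paragraph: establishing that $\mathcal{S}$ is a \emph{component} of $f^{-1}(\Gamma)$ (and not merely contained in one) as well as the auxiliary smoothness of $\Gamma$. Both of these rely on the tubular neighborhood on which $f$ is a local biholomorphism, which is precisely where the no-critical-point hypothesis is used; without it, branch points on $\mathcal{S}$ could attach other arcs of $f^{-1}(\Gamma)$ to $\mathcal{S}$ and $\Gamma$ might fail to be smooth, and Theorem~\ref{thm: Main.} would no longer apply.
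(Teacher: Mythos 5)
Your proof is correct and is essentially the paper's argument in contrapositive form: both hinge on applying Theorem~\ref{thm: Main.} to $\Gamma=f(\mathcal{S})$ together with the observation that, absent critical points on $\mathcal{S}$, the curve $\mathcal{S}$ is a full connected component of $f^{-1}(\Gamma)$ (the paper phrases this as: if the component $\Lambda\supsetneq\mathcal{S}$, it must branch off $\mathcal{S}$ at a non-conformal point). If anything you are more careful than the paper, which invokes Theorem~\ref{thm: Main.} without addressing the smoothness of $f(\mathcal{S})$ or the possibility that $\infty\in f(\mathcal{S})$.
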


In Section~\ref{sect: Lemmas and proof.} we will provide proofs of Theorem~\ref{thm: Main.} and Corollary~\ref{cor: Non-Jordan curve or crit. point.}.  In Section~\ref{sect: Example.} we apply Corollary~\ref{cor: Non-Jordan curve or crit. point.} to show that the image of a certain smooth Jordan curve $\mathcal{S}$ under a certain meromorphic function $f$ is not a Jordan curve, without explicitly finding a point of intersection in $f(\mathcal{S})$.

\section{Lemmas and Proofs}\label{sect: Lemmas and proof.}

We begin with a topological observation on the convergence of preimages under continuous maps.  First we introduce a bit of standard notation.

\begin{definition}
For any $x\in\mathbb{C}$, $X\subset\mathbb{C}$, and $\delta>0$, we define $$B(x;\delta)=\{y\in\mathbb{C}:|x-y|<\delta\}$$ and $$B(X;\delta)=\displaystyle\bigcup_{z\in X}B(z;\delta).$$
\end{definition}

\begin{lemma}\label{lem: Convergence of preimages.}
Let $U\subset\mathbb{C}$ be an open bounded set, and let $h:U\to\hat{\mathbb{C}}$ be continuous.  Choose some $w_0\in\mathbb{C}$.  Then for any $\delta>0$ there exists an $\epsilon>0$ such that if $w\in B(w_0;\epsilon)$, then $h^{-1}(w)\subset B(h^{-1}(w_0)\cup\partial U;\delta)$.
\end{lemma}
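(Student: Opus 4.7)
The plan is to argue by contradiction using compactness of $\overline{U}$ together with continuity of $h$. Suppose the conclusion fails for some $\delta>0$. Then for every positive integer $n$ (taking $\epsilon=1/n$) there is a point $w_n\in B(w_0;1/n)$ and a preimage $z_n\in h^{-1}(w_n)$ with $z_n\notin B(h^{-1}(w_0)\cup\partial U;\delta)$. In particular $w_n\to w_0$, and each $z_n$ is at distance at least $\delta$ from both $h^{-1}(w_0)$ and $\partial U$.

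Since $U$ is bounded, $\overline{U}$ is compact, so after passing to a subsequence we may assume $z_n\to z_\infty\in\overline{U}$. Because each $z_n$ lies at distance $\geq\delta$ from $\partial U$, so does $z_\infty$; hence $z_\infty\notin\partial U$, which forces $z_\infty\in U$. Now $h$ is continuous at $z_\infty$, so $h(z_n)\to h(z_\infty)$ in $\hat{\mathbb{C}}$. On the other hand $h(z_n)=w_n\to w_0\in\mathbb{C}$, so $h(z_\infty)=w_0$ and therefore $z_\infty\in h^{-1}(w_0)$.

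This contradicts the fact that every $z_n$ is at distance at least $\delta$ from $h^{-1}(w_0)$, since the same must then hold of the limit $z_\infty$. The contradiction completes the proof.

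There is no genuine obstacle here; the one small point that wants care is ensuring that the limit $z_\infty$ really lies in $U$ (so that continuity of $h$ can be invoked and so that $h(z_\infty)$ is defined), which is exactly why the set $\partial U$ has been thickened into the target neighborhood in the statement. The hypothesis $w_0\in\mathbb{C}$ (rather than $w_0=\infty$) is used implicitly when we assert $w_n\in B(w_0;1/n)$ in the Euclidean metric, but plays no deeper role.
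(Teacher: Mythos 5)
Your argument is correct, and it is precisely the ``elementary exercise in compactness and continuity'' that the paper invokes without writing out: the paper gives no details, and your sequential-compactness contradiction (extracting $z_\infty\in\overline{U}$, ruling out $\partial U$ by the $\delta$-separation, then forcing $z_\infty\in h^{-1}(w_0)$ by continuity) is the standard way to fill that gap. Nothing further is needed.
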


\begin{proof}
This is an elementary exercise in compactness and continuity.
\end{proof}

Our second lemma shows that, under the conditions of Item~2 from Theorem~\ref{thm: Main.}, if $E$ is a component of $f^{-1}(\Omega_-)\cap D$, and $z\in\partial E$, then $f(z)\in\partial\Omega_-$.

\begin{lemma}\label{lem: Full copies.}
Let $h:G\to\hat{\mathbb{C}}$ be meromorphic.  Let $\Omega\subset\mathbb{C}$ be open, bounded, and connected, and assume that the quantity $\mathcal{N}_h(w)$ is independent of the choice of $w\in\Omega$.  Then if $E\subset D$ is a component of the set $h^{-1}(\Omega)\cap D$, and $z_0\in\partial E$, then $h(z_0)\in\partial\Omega$.
\end{lemma}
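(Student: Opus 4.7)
The plan is to argue by contradiction. Set $w_0:=h(z_0)$. Picking $z_n\in E$ with $z_n\to z_0$, continuity of $h$ into $\hat{\mathbb{C}}$ together with boundedness of $\Omega$ yields $w_0\in\overline\Omega\subset\mathbb{C}$, so the goal reduces to ruling out $w_0\in\Omega$. Since $E\subset D$ implies $\partial E\subset\overline D$, either $z_0\in D$ or $z_0\in\mathcal{S}$, and I would split into two cases accordingly.

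If $z_0\in D$, the hypothesis on $\mathcal{N}_h$ is not even needed. Because $\Omega$ is open and $h$ is continuous, some open ball $V\subset D$ about $z_0$ satisfies $h(V)\subset\Omega$, so $V\subset h^{-1}(\Omega)\cap D$. Being connected and meeting $E$ (since $z_0\in\partial E$), $V$ lies in the component $E$ by maximality, giving $z_0\in E$. This contradicts $z_0\in\partial E$ since $E$ is open.

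The substantive case is $z_0\in\mathcal{S}$, where the multiplicity hypothesis is essential. Let $n$ be the common value of $\mathcal{N}_h$ on $\Omega$. By compactness of $\overline D$ and discreteness of fibers of the meromorphic map $h$, $h^{-1}(w_0)\cap\overline D$ is finite, consisting of interior points $\zeta_1,\dots,\zeta_m\in D$ (with multiplicities summing to $n$) and boundary points $\xi_1,\dots,\xi_p\in\mathcal{S}$, one of which is $z_0$. Because $w_0\ne\infty$, $h$ is analytic and non-constant at $z_0$, so on some small disk $B(z_0;\delta)$ it realizes a $k$-fold branched cover of a neighborhood of $w_0$; in particular every $w$ close to $w_0$ with $w\ne w_0$ has $k$ simple preimages in $B(z_0;\delta)$. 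Using $z_0\in\partial E$ together with non-constancy of $h$ near $z_0$ (which forces $h\not\equiv w_0$ on any sequence from $E$ accumulating at $z_0$, by the identity principle), I would choose $z_1\in E\cap B(z_0;\delta)$ with $w_1:=h(z_1)\ne w_0$. Then $w_1\in\Omega$ is close to $w_0$, and $z_1\in D$ is one of the $k$ local simple preimages of $w_1$ near $z_0$.

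The proof concludes by counting preimages of $w_1$ in $D$: near each $\zeta_i$ the local preimages all remain in $D$ and contribute exactly the multiplicity $k_i$, summing to $n$; near $z_0$ the preimage $z_1$ contributes at least one additional preimage in $D$; near the remaining $\xi_l$ contributions are non-negative; and by Lemma~\ref{lem: Convergence of preimages.} applied to $h$ on an open bounded neighborhood of $\overline D$, no preimages of $w_1$ in $D$ can arise away from the neighborhoods of the $\zeta_i$ and $\xi_l$ once $w_1$ is close enough to $w_0$. This yields $\mathcal{N}_h(w_1)\ge n+1$, contradicting $\mathcal{N}_h\equiv n$ on $\Omega$. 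The main obstacle is precisely this bookkeeping step: one must ensure that $z_1$ genuinely increments the count in $D$ and does not merely compensate for some preimage that has drifted out of $D$ near another $\xi_l$, which rests on the interior stability of preimages at the $\zeta_i$ and on Lemma~\ref{lem: Convergence of preimages.} to rule out stray preimages of $w_1$ appearing elsewhere in $D$.
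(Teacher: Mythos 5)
Your proposal is correct and follows essentially the same route as the paper's proof: split on whether $z_0\in D$ or $z_0\in\partial D$, and in the boundary case derive a contradiction by producing a point of $E$ near $z_0$ whose image $w_1\in\Omega$ has the $n$ stable preimages near the interior preimages of $w_0$ plus at least one extra preimage near $z_0$, forcing $\mathcal{N}_h(w_1)>n$. Your extra bookkeeping via Lemma~\ref{lem: Convergence of preimages.} is harmless but unnecessary, since only a lower bound on $\mathcal{N}_h(w_1)$ is needed.
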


\begin{proof}

Let $h$, $\Omega$, and $E$ be as in the statement of the lemma.  Choose some $z_0\in\partial E$, and set $w_0=h(z_0)$.  Suppose by way of contradiction that $w_0\in\Omega$.  We proceed by cases.

\begin{case}$z_0\in D$.\end{case}

Since $h$ is continuous and $\Omega$ and $D$ are open, there is a neighborhood of $z_0$ contained in $h^{-1}(\Omega)\cap D$, contradicting our choice of $z_0$.

\begin{case}$z_0\notin D$.\end{case}

Set $m=\mathcal{N}_h(w_0)$, and let $z_1,z_2,\ldots,z_k$ be the distinct preimages of $w_0$ in $D$, having multiplicities $m_1,m_2,\ldots,m_k$ (so that $m=m_1+m_2+\cdots+m_k$).  Choose some $\delta>0$ such that $\delta<\min(|z_i-z_0|:1\leq i\leq k)$.  Choose disjoint neighborhoods $D_1,D_2,\ldots,D_k\subset D$ such that for each $1\leq i\leq k$, the following holds.

\begin{itemize}
\item $z_i\in D_i$.
\item $D_i\cap B(z_0;\delta)=\emptyset$.
\item $h$ is exactly $m_i$-to-$1$ on $D_i\setminus\{z_i\}$.
\end{itemize}

Choose some point $x\in B(z_0;\delta)\cap E$ sufficiently close to $z_0$ so that $$h(x)\in\displaystyle\bigcap_{i=1}^kh(D_i).$$  Then $h^{-1}(h(x))\cap D$ contains $x$, along with $m_i$ distinct points in $D_i$, for each $1\leq i\leq k$.  Thus since $x\notin D_i$ for any $1\leq i\leq k$, we have that $\mathcal{N}_h(h(x))$ is at least $$m_1+m_2+\ldots+m_k+1>m.$$  Since $x\in E$, $h(x)\in\Omega$, so we have a contradiction of the choice of $m$.

We therefore conclude that $w_0\notin\Omega$.  Since $h$ is continuous we must have that $w_0\in\overline{\Omega}$, so we conclude that $w_0\in\partial\Omega$.
\end{proof}

With these lemmas under our belt, we proceed to the proof of Theorem~\ref{thm: Main.}.

\begin{proof}[Proof of Theorem~\ref{thm: Main.}.]
We begin by assuming that $\mathcal{S}$ is a $\Gamma$-pseudo-lemniscate of $f$ in $G$.

Fix now some component $E$ of $f^{-1}(\Omega_-)$ in $D$. We wish to show that there is some positive integer $n_E$ such that $f$ maps $E$ exactly $n_E$-to-$1$ onto $\Omega_-$ (counting multiplicity).

Let $\Lambda_1,\Lambda_2,\ldots,\Lambda_k$ denote the components of $\partial E$, with $\Lambda_1$ being the component of $\partial E$ which is in the unbounded component of $E^c$.  Since $f$ is continuous and open, $f$ maps each $\Lambda_i$ to the boundary of $\Omega_-$, namely to $\Gamma$.  Therefore for each $i$, we define $n_i$ to be the number of times $f(z)$ winds around $\Gamma$ as $z$ traverses $\Lambda_i$ one time with positive orientation.  

By the Riemann mapping theorem, for any point $w\in\Omega_-$, we can find some conformal map $\rho:\Omega_-\to\mathbb{D}$ such that $\rho(w)=0$, and since the boundary of $\Omega_-$ is smooth, each such $\rho$ extends conformally to a neighborhood of $\overline{\Omega_-}$.  Therefore, replacing $f$ with $\rho\circ f$, the components of $\partial E$ are honest lemniscates of $\rho\circ f$.  Since $\rho$ is a conformal map on the closure of $\Omega_-$, if $f(z)$ winds once around $\Gamma$, $\rho\circ f(z)$ winds once around the unit circle, and with the same orientation (since $\Omega_-$ is simply connected and $\rho$ is analytic, $\rho$ cannot reverse the orientation).

Therefore it follows from the argument principle that the number of zeros of $\rho\circ f$ in $E$ (counted with multiplicity) is

$$n_E:=n_1-(n_2+n_3+\cdots+n_k).$$

That is, the number of members of $f^{-1}(w)$ in $E$ (counted with multiplicity) is $n_E$.  Since this number $n_E$ is independent of the choice of $w\in\Omega_-$, we conclude that $f$ maps $E$ precisely $n_E$-to-$1$ onto $\Omega_-$.  Therefore for any $w\in\Omega_-$, $$\mathcal{N}_f(w)=\displaystyle\sum_E n_E.$$

This sum is taken over all components $E$ of $f^{-1}(\Omega_-)$ in $D$, a quantity which again is independent of the choice of $w\in\Omega_-$.  We call this sum $n_-$.  Doing the same calculation with $\Omega_-$ replaced by $\Omega_+$, we obtain the corresponding number $n_+$.

We now consider the set $f^{-1}(\Gamma)\cap D$.  Since $\mathcal{S}$ is a connected component of $f^{-1}(\Gamma)$ and $\mathcal{S}$ is compact, and $f^{-1}(\Gamma)$ is closed in $G$, it follows that $f^{-1}(\Gamma)\setminus\mathcal{S}$ is isolated from $\mathcal{S}$ by some positive distance.  Let $\delta>0$ be chosen to be less than half that distance.  That is, $$\displaystyle\inf_{x\in\mathcal{S},y\in f^{-1}(\Gamma)\setminus\mathcal{S}}|x-y|>2\delta.$$

Since $D$ is simply connected, the set $D_\delta:=\left\{z\in D:d(z,\mathcal{S})<\delta\right\}$ is connected (where $d(z,\mathcal{S})$ denotes the distance $\displaystyle\inf_{x\in\mathcal{S}}|z-x|$).  Therefore since $f$ is continuous, and $f^{-1}(\Gamma)$ does not intersect $D_\delta$, we conclude that either $f(D_\delta)\subset\Omega_-$ or $f(D_\delta)\subset\Omega_+$.  Without loss of generality assume that $f(D_\delta)\subset\Omega_-$.

Choose now some $w_0$ in $\Gamma$.  Let $z_1,z_2,\ldots,z_l\in D$ denote the distinct preimages of $w_0$ in $D$, having multiplicities $m_1,m_2,\ldots,m_l$.  Choose disjoint domains $D_1,D_2,\ldots,D_l\subset D$ such that the following holds for each $i$.

\begin{itemize}
\item $z_i\in D_i$.
\item $D_i\cap D_\delta=\emptyset$.
\item $f$ has no critical points in $D_i\setminus\{z_i\}$.
\item $f$ is exactly $m_i$-to-$1$ in $D_i\setminus\{z_i\}$.
\end{itemize}

Reduce now $\delta>0$ if necessary so that for each $i$, $B(z_i;\delta)\subset D_i$. By Lemma~\ref{lem: Convergence of preimages.} we can find an $\epsilon>0$ such that if $w\in\Omega_+$ is within $\epsilon$ of $w_0$, then $$f^{-1}(w)\cap D\subset B((f^{-1}(w_0)\cap D)\cup\partial D;\delta)\cap D.$$  Choose $\epsilon>0$ smaller if necessary so that for each $i$, $B(w_0;\epsilon)\subset f(D_i)$.

Fix now some $w\in B(w_0;\epsilon)\cap\Omega_+$.  Since $w\in\Omega_+$, and $f(D_\delta)\subset\Omega_-$, no point in $f^{-1}(w)\cap D$ is within $\delta$ of $\partial D$.  Therefore by choice of $\epsilon$ we have $$f^{-1}(w)\cap D\subset B(f^{-1}(w_0)\cap D;\delta)\subset\displaystyle\bigcup_{i=1}^lD_i.$$

Now, for each $i$, since $B(w_0;\epsilon)\subset f(D_i)$, $w\in f(D_i)$.  Since $f$ is exactly $m_i$-to-$1$ in $D_i\setminus\{z_i\}$, there are exactly $m_i$ distinct points in $f^{-1}(w)\cap D_i$, and since none of these points are critical points, each one contributes exactly $1$ to the total $n_+$ many elements in $f^{-1}(w)\cap D$.  We therefore conclude that $\mathcal{N}_f(w_0)=n_+$.

Note that if we had selected the point $w$ to be a point in $\Omega_-$ instead of in $\Omega_+$, we would have obtained the same result, except that some members of $f^{-1}(w)\cap D$ could additionally reside in $D_\delta$.  Our conclusion would then be that the $\mathcal{N}_f(w_0)\leq n_-$.  Since $\mathcal{N}_f(w_0)=n_+$ and $\mathcal{N}_f(w_0)\leq n_-$, we therefore have that $$\mathcal{N}_f(w_0)=\min(n_-,n_+).$$  

If we had adopted the other assumption, that $f(D_\delta)\subset\Omega_+$, then similar work would give the same result.  This concludes the first direction of our proof.

Suppose now that the second item holds.  That is, that there are some non-negative integers $n_-$ and $n_+$ for which the following holds.

\begin{itemize}
\item For every $w\in\Omega_-$, $\mathcal{N}_f(w)=n_-$.
\item For every $w\in\Omega_+$, $\mathcal{N}_f(w)=n_+$.
\item For every $w\in\Gamma$, $\mathcal{N}_f(w)=\min(n_-,n_+)$.
\end{itemize}

It follows from Lemma~\ref{lem: Full copies.} that if $E$ is any component of $f^{-1}(\Omega_-)\cap D$ or of $f^{-1}(\Omega_+)\cap D$, then $f(\partial E)\subset\Gamma$.  We therefore conclude that $f(\mathcal{S})=f(\partial D)\subset \Gamma$.  Our goal is to show that $\mathcal{S}$ itself is an entire component of $f^{-1}(\Gamma)$ in $G$.

For any point $z\in\mathcal{S}$, since $f$ is analytic at $z$ and $\Gamma$ is smooth, if $z$ is not a critical point of $f$ then $f^{-1}(\Gamma)$ is a smooth curve close to $z$.  On the other hand, if $z$ is a critical point of $f$ with multiplicity $m$ then, close to $z$, $f^{-1}(\Gamma)$ consists of $2(m+1)$ paths meeting at $z$, forming equal angles $\dfrac{2\pi}{2(m+1)}$ around $z$.  Since $\mathcal{S}$ is smooth, it therefore suffices to show that our assumption on $f$ implies that $f$ does not have any critical points on $\mathcal{S}$.

Suppose by way of contradiction that $z_0\in\mathcal{S}$ is a critical point of $f$ with multiplicity $m$. Let $\Lambda$ denote the component of $f^{-1}(\Gamma)$ which contains $\mathcal{S}$.  Since $m\geq1$, $$\dfrac{2\pi}{2(m+1)}=\dfrac{\pi}{m+1}<\pi.$$  Therefore since $\mathcal{S}$ is smooth, there are edges of $\Lambda$ which extend from $z_0$ into either face of $\mathcal{S}$, and in particular into $D$.  Let $\gamma$ denote some such edge of $\Lambda$ emanating from $z_0$ into $D$.  Assume without loss of generality that as $z$ traverses $\gamma$, $f(z)$ traverses $\Gamma$ with positive orientation.  Then for any point $w\in\Gamma$ taken arbitrarily close to $f(z_0)$ (with positive orientation around $\Gamma$ from $f(z_0)$), there is some point $z\in D$ (on $\gamma$) which is arbitrarily close to $z_0$, such that $f(z)=w$.  A contradiction now follows from exactly the same reasoning as was used to prove Lemma~\ref{lem: Full copies.}.

We therefore conclude that $\mathcal{S}$ contains no critical point of $f$, and thus that the component of $f^{-1}(\Gamma)$ which contains $\mathcal{S}$ is just $\mathcal{S}$ itself.  That is, $\mathcal{S}$ is a $\Gamma$-pseudo-lemniscate of $f$ in $G$.  This concludes the second direction of the proof.

Now suppose that Items~1~and~2 in the theorem do obtain, and that moreover $f$ is analytic in $D$.  Since $f^{-1}(\infty)\cap D=\emptyset$, immediately we have that $n_+=0$.  If $\phi:\mathbb{D}\to D$ and $\psi:\mathbb{D}\to\Omega_-$ are Riemann maps, then $\psi^{-1}\circ f\circ\phi$ is a $n_-$-to-$1$ analytic self map of $\mathbb{D}$, which by Theorem~\ref{thm: Blaschke prod. version.} is a degree~$n_-$ Blaschke product.

Finally, suppose that Items~1~and~2 in the theorem do obtain, and that $\Gamma=\mathbb{T}$.  Let $\phi:\mathbb{D}\to D$ be any Riemann map.  Since $\partial D$ is smooth, $\phi$ extends conformally to a neighborhood of the closed unit disk.  Since $f$ satisfies Item~2, $f\circ\phi$ has $n_-$ zeros and $n_+$ poles in $\mathbb{D}$, and since $f$ satisfies Item~1, $|f\circ\phi|=1$ on $\mathbb{T}$.  Let $A$ denote a degree $n_-$ Blaschke product having the same zeros as $f\circ\phi$, with the same multiplicities, and let $B$ denote a degree $n_+$ Blaschke product, having zeros exactly at the locations of the poles of $f\circ\phi$, with the same multiplicities.  Then $(f\circ\phi)\cdot\frac{B}{A}$ has no zeros or poles in $\mathbb{D}$ and on $\partial D$, $$\left|(f\circ\phi)\cdot\frac{B}{A}\right|=|f\circ\phi|\cdot\frac{|B|}{|A|}=1.$$  Therefore by the maximum/minimum modulus theorems, $(f\circ\phi)\cdot\frac{B}{A}$ is a unimodular constant $\lambda$.  Replacing $A$ by $\lambda\cdot A$, we have $f\circ\phi=\frac{A}{B}$.

\end{proof}

\begin{proof}[Proof of Corollary~\ref{cor: Non-Jordan curve or crit. point.}.]
Suppose that $f$, $w_1$, $w_2$, and $w_3$ are as in the statement of the lemma.  Suppose that $f(\mathcal{S})$ is a Jordan curve.  Setting $\Gamma=f(\mathcal{S})$, since $\mathcal{N}_f(w_1)$, $\mathcal{N}_f(w_2)$, and $\mathcal{N}_f(w_3)$ are distinct, $\mathcal{S}$ cannot be a $\Gamma$-pseudo-lemniscate of $f$ by Theorem~\ref{thm: Main.}.  Let $\Lambda$ denote the component of $f^{-1}(\Gamma)$ in $G$ which contains $\mathcal{S}$.  Since $\Lambda\neq\mathcal{S}$, $\Lambda$ must branch off from $\mathcal{S}$ at some point, and thus $f$ must not be conformal at that point.  That is, $f$ must have a critical point on $\mathcal{S}$.

\end{proof}

\section{Application of Corollary~\ref{cor: Non-Jordan curve or crit. point.}}\label{sect: Example.}

\textbf{Example.} Let $\mathcal{S}$ denote some smooth curve in $\mathbb{C}$ which contains the quadrilateral with vertices $0$, $1$, $2i$, and $1+4i$, and which approximates the boundary of this shape closely.  Let $D$ denote the bounded face of $\mathcal{S}$.  Then setting $f(z)=e^z$, we claim that $f(\mathcal{S})$ is not a Jordan curve.

To see this, observe that for $w_1=e^0i$, $w_2=e^1i$, and $w_3=e^2i$, we have $\mathcal{N}_f(w_1)=1$, $\mathcal{N}_f(w_2)=2$, and $\mathcal{N}_f(w_3)=0$.  Therefore Corollary~\ref{cor: Non-Jordan curve or crit. point.} tells us that either $f$ has a critical point on $\mathcal{S}$ or $f(\mathcal{S})$ is not a Jordan curve.  Since $f(z)=e^z$ has no critical points, we conclude that $f(\mathcal{S})$ is not a Jordan curve.

\bibliographystyle{amsplain}

\begin{thebibliography}{99}



\bibitem{RY}
T. Richards and M. Younsi,
Conformal Models and Fingerprints of Pseudo-lemniscates,
\textsl{Const. Approx.},
\textit{http://dx.doi.org/10.1007/s00365-016-9348-0} (2016),
1--13. 

\end{thebibliography}

\end{document}